\newtheorem{theorem}{Theorem}
\theoremstyle{plain}
\newtheorem{corollary}{Corollary}
\newtheorem{definition}{Definition}
\newtheorem{example}{Example}
\newtheorem{proposition}{Proposition}
\newtheorem{remark}{Remark}
\numberwithin{equation}{section}
\begin{document}
\title[ Fibonacci sequence via $\ $the $\sum -$ trnasform]{The Fibonacci
sequence via the $\sum -$ transform}
\author{Dejenie A. Lakew}
\address{John Tyler Community College\\
Department of Mathematics}
\email{dlakew@jtcc.edu}
\urladdr{http://www.jtcc.edu}
\date{January 1, 2014}
\subjclass[2010]{Primary 44A10, 65Q10, 65Q30 }
\keywords{$\sum -$ transform, discrete differential equations, Fibonacci
sequence, infinite order differential equation}

\begin{abstract}
In this short article, we study different problems described as initial
value problems of discrete differential equations and develop a \ a
transform method called the $\sum -$ transform, a discrete version of the
continuous Laplace transform to generate solutions as rational functions of
integers to these initial value problems. Particularly we look how the
method generates the traditionally known numbers called Fibonacci sequence
as a solution to an initial value problem of a discrete differential
equation.
\end{abstract}

\maketitle

\section{\protect\bigskip Introduction}

In this short article we introduce a discrete analogue of the continuous
Laplace transform, called the $\sum -$ transform or discrete Laplace
transform (DLT) to study solutions to some initial value problems of
discrete difference equations. The solutions will be sequences of numbers
not necessarily integers but rational functions of integer polynomials. The
Fibonacci numbers we know will be particular case of the general sequence we
obtain through such process.

\ 

The Fibonacci numbers are one of the wonders of old mathematics. They
represent several natural things, such as leaves of trees, foliages of
flowers, replications of some species, etc. These Fibonacci numbers are
given by the sequence : 
\begin{equation*}
1,\text{ }1,\text{ }2,\text{ }3,\text{ }5,\text{ }8,\text{ }13,\text{ }21,%
\text{ }34,.....
\end{equation*}%
The pattern is that a number is the sum of two of the previous or
predecessor numbers and can be written as :

\begin{equation}
\left\{ 
\begin{array}{c}
a_{n+2}=a_{n+1}+a_{n}\text{ \ } \\ 
\text{\ \ } \\ 
a_{1}=1,\text{ }a_{2}=1\text{ \ \ for }n=1,\text{ }2,\text{ }3,....%
\end{array}%
\right.  \label{Fibeqn}
\end{equation}

\ 

We can rewrite equation \ref{Fibeqn} as 
\begin{equation*}
\left\{ 
\begin{array}{c}
\Delta a_{n+1}=a_{n}\text{\ } \\ 
a_{1}=1,a_{2}=1%
\end{array}%
\right.
\end{equation*}

We know that the Fibonacci sequence has a formula that generates the
numbers. That formula is :

\ \ 
\begin{equation}
a_{n}=\frac{\left( 1+\sqrt{5}\right) ^{n}-\left( 1-\sqrt{5}\right) ^{n}}{%
2^{n}\sqrt{5}}  \label{Fib}
\end{equation}

\ \ \ \ \ \ \ \ \ \ \ \ \ \ \ \ \ \ \ 

In discrete mathematics or sequence courses we ask students to verify using
mathematical induction that indeed, the given formula generates the
Fibonacci numbers. It is also a known fact that the quotients of consecutive
numbers of the sequence converges to a number: 
\begin{equation*}
\frac{a_{n+1}}{a_{n}}\longrightarrow \frac{1+\sqrt{5}}{2}\text{ as }%
n\rightarrow \infty
\end{equation*}

\ 

We use a powerful method, the $\sum -$ transform or the \textit{discrete
version of the Laplace transform} (for more reading on the transform, see 
\cite{DL}) that generates solutions to many sequential or discrete initial
value problems of difference equations which are prevalent in discrete
mathematics and some applicable mathematics. We will also see how the method
is used to find solutions of a different kind of discrete differential
equations such as:

\begin{equation*}
\left\{ 
\begin{array}{c}
a_{n+1}=\lambda a_{n}+\beta \\ 
a_{1}=a(1)\text{, }n=1,\text{ }2,\text{ }3,..%
\end{array}%
\right.
\end{equation*}

\ 

and 
\begin{equation*}
\left\{ 
\begin{array}{c}
a_{n+2}=a_{n+1}+a_{n} \\ 
a_{1}=a\left( 1\right) ,\text{ }a_{2}=a\left( 2\right) \text{, }n=1,\text{ }%
2,\text{ }3,...%
\end{array}%
\right.
\end{equation*}

\ \ 

\ \ \ \ \ \ \ \ \ \ \ \ \ \ \ \ \ \ \ \ \ \ \ \ \ \ \ \ 

\section{\protect\bigskip The $\sum -$Transform}

\begin{definition}
Let 
\begin{equation*}
f:%
\mathbb{N}
\rightarrow 
\mathbb{R}%
\end{equation*}%
be a sequence and let $s>0$. \ We define the $\sum -$transform or the
discrete Laplace transform of $f$ \ by%
\begin{equation*}
\ell _{d}\left\{ f\left( n\right) \right\} \left( s\right)
:=\dsum\limits_{n=1}^{\infty }f\left( n\right) e^{-sn}
\end{equation*}%
provided the series converges.
\end{definition}

\ \ \ \ \ 

\begin{theorem}
Existence of a $\sum -$ transform\textbf{\ }

Let $f:%
\mathbb{N}
\rightarrow 
\mathbb{R}
$ be a sequence such that 
\begin{equation*}
\mid f\left( n\right) \mid \leq \alpha e^{s_{0}n}\text{ for }\alpha >0,\text{
}s_{0}>0
\end{equation*}%
Then 
\begin{equation*}
\dsum\limits_{n=1}^{\infty }f\left( n\right) e^{-sn}
\end{equation*}

is absolutely convergent and hence is convergent. Therefore, for such a
sequence, the discrete Laplace transform

\begin{equation*}
\ell _{d}\left\{ f\left( n\right) \right\} \left( s\right)
\end{equation*}%
exists finitely for $s>s_{0}$.
\end{theorem}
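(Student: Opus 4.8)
The plan is to show absolute convergence of the series by a direct comparison with a convergent geometric series, and then invoke the standard fact that absolute convergence implies convergence of the original series. The hypothesis $\mid f(n)\mid \leq \alpha e^{s_{0}n}$ is precisely the bound that makes this comparison work once we restrict to $s>s_{0}$.

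First I would bound the absolute value of the general term of the transformed series. Since $\mid f(n)e^{-sn}\mid = \mid f(n)\mid e^{-sn}$ and the hypothesis gives $\mid f(n)\mid \leq \alpha e^{s_{0}n}$, I obtain
\begin{equation*}
\mid f(n)e^{-sn}\mid \leq \alpha e^{s_{0}n}e^{-sn}=\alpha e^{-(s-s_{0})n}.
\end{equation*}
The crucial observation is that for $s>s_{0}$ the exponent $-(s-s_{0})$ is negative, so writing $r=e^{-(s-s_{0})}$ we have $0<r<1$, and the right-hand side is exactly $\alpha r^{n}$, the general term of a convergent geometric series.

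Next I would sum the comparison bound. The series $\sum_{n=1}^{\infty}\alpha r^{n}$ converges because $0<r<1$; indeed its value is $\alpha r/(1-r)$, which is finite. By the comparison test, the series $\sum_{n=1}^{\infty}\mid f(n)e^{-sn}\mid$ therefore converges, which is the statement that $\sum_{n=1}^{\infty}f(n)e^{-sn}$ converges absolutely. Finally, since every absolutely convergent series of real numbers is convergent, the series $\sum_{n=1}^{\infty}f(n)e^{-sn}$ converges, so $\ell_{d}\{f(n)\}(s)$ is a finite real number for each $s>s_{0}$.

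I do not expect any genuine obstacle here, since this is the discrete analogue of the classical existence argument for the Laplace transform. The only point that requires a moment of care is making explicit that the condition $s>s_{0}$ is exactly what forces the geometric ratio $r=e^{-(s-s_{0})}$ to lie strictly below $1$; without it the comparison series would diverge. Everything else is a routine application of the comparison test and the absolute-convergence-implies-convergence theorem.
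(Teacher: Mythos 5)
Your proof is correct and follows essentially the same route as the paper: comparison of $\mid f(n)e^{-sn}\mid$ with the geometric series $\sum_{n=1}^{\infty}\alpha e^{-(s-s_{0})n}$, whose sum $\frac{\alpha}{e^{s-s_{0}}-1}$ is finite precisely when $s>s_{0}$. If anything, your version is slightly more careful than the paper's, since you bound term by term and invoke the comparison test before concluding, rather than placing the absolute value around the (not yet known to converge) sum itself.
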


\begin{proof}
Since 
\begin{equation*}
\mid \dsum\limits_{n=1}^{\infty }f\left( n\right) e^{-sn}\mid \leq
\dsum\limits_{n=1}^{\infty }\alpha e^{\left( s_{0}-s\right) n}=\frac{\alpha 
}{e^{s-s_{0}}-1}<+\infty
\end{equation*}

\ \ 

for $s>s_{0}$, we conclude that sequences which are polynomials in $n$ have
convergent $\sum -$ transforms or discrete Laplace transform.
\end{proof}

\ 

\begin{proposition}
(Transform of translate of a sequence). For $k\in 
\mathbb{N}
$,

\begin{equation*}
\ell _{d}\left\{ f\left( n+k\right) \right\} \left( s\right) =e^{ks}\ell
_{d}\left\{ f\left( n\right) \right\} \left( s\right)
-\dsum\limits_{i=1}^{k}f\left( i\right) e^{\left( k-i\right) s}
\end{equation*}
\end{proposition}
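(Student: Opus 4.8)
The plan is to prove the identity by a direct manipulation of the defining series, reducing everything to a shift of the summation index. First I would write the left-hand side straight from the definition, so that $\ell_d\{f(n+k)\}(s) = \sum_{n=1}^{\infty} f(n+k)\, e^{-sn}$. The natural move is then the change of index $m = n+k$, which turns the summand $f(n+k)\, e^{-sn}$ into $f(m)\, e^{-s(m-k)} = e^{ks} f(m)\, e^{-sm}$ and shifts the lower limit from $n=1$ up to $m = k+1$.

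Next I would pull the constant factor $e^{ks}$ outside the sum and split the resulting tail series as $\sum_{m=k+1}^{\infty} = \sum_{m=1}^{\infty} - \sum_{m=1}^{k}$. The first piece is exactly $e^{ks}\,\ell_d\{f(n)\}(s)$ by the definition of the transform, while distributing $e^{ks}$ across the finite correction block $\sum_{m=1}^{k} f(m)\, e^{-sm}$ yields $\sum_{m=1}^{k} f(m)\, e^{(k-m)s}$. Renaming the dummy index $m$ back to $i$ produces the stated formula verbatim, so the computation itself is essentially a one-line reindexing.

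The only point genuinely requiring care --- and the step I would flag as needing justification rather than the step that is hard --- is the legitimacy of the reindexing and of splitting the series into its initial block plus tail. Both operations are valid because the existence theorem already established guarantees that $\sum f(n)\, e^{-sn}$ is absolutely convergent for $s > s_0$ whenever $f$ grows at most exponentially; absolute convergence is precisely what licenses rearranging terms and detaching the finite head of the series without altering the value of the sum. I would therefore state at the outset that we operate in the regime $s > s_0$ in which the transform exists finitely, so that every series written down is guaranteed convergent and all manipulations are justified. Aside from this bookkeeping, there is no real obstacle: the argument is an index shift followed by a split of the range of summation.
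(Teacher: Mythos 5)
Your proof is correct and follows essentially the same route as the paper: the substitution $m=n+k$, factoring out $e^{ks}$, and splitting the tail series $\sum_{m=k+1}^{\infty}=\sum_{m=1}^{\infty}-\sum_{m=1}^{k}$ is exactly the paper's computation. Your added remark on absolute convergence for $s>s_{0}$ justifying the manipulations is a sound refinement the paper leaves implicit, but it does not change the argument.
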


\begin{proof}
Let $f:%
\mathbb{N}
\rightarrow 
\mathbb{R}
$ be a sequence. Then

\begin{eqnarray*}
\ell _{d}\left\{ f\left( n+k\right) \right\} \left( s\right)
&=&\dsum\limits_{n=1}^{\infty }f\left( n+k\right) e^{-sn} \\
&=&\dsum\limits_{m=k+1}^{\infty }f\left( m\right) e^{-s\left( m-k\right) } \\
&=&e^{sk}\dsum\limits_{m=k+1}^{\infty }f\left( m\right) e^{-sm} \\
&=&e^{ks}\dsum\limits_{m=1}^{\infty }f\left( m\right)
e^{-sm}-\dsum\limits_{i=1}^{k}f\left( i\right) e^{\left( k-i\right) s} \\
&=&e^{ks}\ell _{d}\left\{ f\left( n\right) \right\} \left( s\right)
-\dsum\limits_{i=1}^{k}f\left( i\right) e^{\left( k-i\right) s}
\end{eqnarray*}
\end{proof}

\begin{corollary}
\begin{equation*}
\ell _{d}\left\{ f\left( n+1\right) \right\} \left( s\right) =e^{s}\ell
_{d}\left\{ f\left( n\right) \right\} \left( s\right) -f\left( 1\right)
\end{equation*}
\end{corollary}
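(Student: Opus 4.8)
The plan is to obtain this identity as the $k=1$ specialization of the preceding Proposition, so essentially no new work is required beyond evaluating a single summand. First I would invoke the translation formula
\begin{equation*}
\ell _{d}\left\{ f\left( n+k\right) \right\} \left( s\right) =e^{ks}\ell
_{d}\left\{ f\left( n\right) \right\} \left( s\right)
-\dsum\limits_{i=1}^{k}f\left( i\right) e^{\left( k-i\right) s},
\end{equation*}
which holds for every $k\in \mathbb{N}$ whenever $f$ possesses a convergent $\sum -$transform, as guaranteed by the Existence Theorem for sequences of subexponential growth.

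Next I would set $k=1$. The factor $e^{ks}$ becomes $e^{s}$, and the finite sum collapses to its lone $i=1$ term, namely $f\left( 1\right) e^{\left( 1-1\right) s}=f\left( 1\right) e^{0}=f\left( 1\right) $. Substituting these two simplifications directly yields
\begin{equation*}
\ell _{d}\left\{ f\left( n+1\right) \right\} \left( s\right) =e^{s}\ell
_{d}\left\{ f\left( n\right) \right\} \left( s\right) -f\left( 1\right) ,
\end{equation*}
which is exactly the claimed corollary.

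Because the statement is a pure specialization, I expect no genuine obstacle: the convergence bookkeeping and the index shift were already carried out once and for all in the proof of the Proposition, and here they only need to be read off at $k=1$. If a self-contained derivation were preferred instead, I could alternatively expand $\ell _{d}\left\{ f\left( n+1\right) \right\} \left( s\right) =\dsum\limits_{n=1}^{\infty }f\left( n+1\right) e^{-sn}$, reindex via $m=n+1$, and peel off the $m=1$ term as in the Proposition; but invoking the already-established general formula is cleaner and avoids repeating the identical computation.
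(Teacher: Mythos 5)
Your proposal is correct and is exactly the argument the paper intends: the corollary is stated immediately after the translation proposition precisely as its $k=1$ specialization, where the sum $\dsum\limits_{i=1}^{k}f\left( i\right) e^{\left( k-i\right) s}$ collapses to the single term $f\left( 1\right)$. No further comment is needed.
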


\begin{proposition}
For $0<a<e^{s}$ we have%
\begin{equation*}
\ell _{d}\left\{ a^{n-1}\right\} \left( s\right) =\frac{1}{e^{s}-a}
\end{equation*}
\end{proposition}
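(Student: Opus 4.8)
The plan is to compute the transform directly from the definition and recognize the resulting series as a geometric series. First I would write
\begin{equation*}
\ell _{d}\left\{ a^{n-1}\right\} \left( s\right) =\sum_{n=1}^{\infty
}a^{n-1}e^{-sn}
\end{equation*}
and factor a single $e^{-s}$ out of each term so that the summand becomes $e^{-s}\left( ae^{-s}\right) ^{n-1}$. Reindexing with $m=n-1$ then turns the sum into $e^{-s}\sum_{m=0}^{\infty }\left( ae^{-s}\right) ^{m}$, a standard geometric series with common ratio $r=ae^{-s}$.

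Second, I would invoke the hypothesis $0<a<e^{s}$, which is exactly what guarantees $0<r<1$, so that the geometric series converges with value $1/\left( 1-ae^{-s}\right) $. This is the only place the constraint on $a$ enters the argument, and it is essential: without it the common ratio would not lie strictly below $1$ and the geometric-sum formula would fail. Summing therefore gives
\begin{equation*}
\ell _{d}\left\{ a^{n-1}\right\} \left( s\right) =\frac{e^{-s}}{1-ae^{-s}}.
\end{equation*}

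Finally, I would clear the exponentials by multiplying numerator and denominator by $e^{s}$, which collapses the expression to $1/\left( e^{s}-a\right) $, as claimed. The whole computation is routine; the only points that require any care are performing the index shift correctly and checking that the ratio $ae^{-s}$ is strictly less than $1$ so that the closed form of the geometric sum applies. (One may also view this as a special case of the existence theorem with $s_{0}=\ln a$, which independently confirms convergence for $s>s_{0}$, i.e. for $a<e^{s}$.)
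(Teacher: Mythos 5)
Your proof is correct and follows essentially the same route as the paper: both compute the transform directly from the definition and sum the resulting geometric series, with the hypothesis $0<a<e^{s}$ ensuring the common ratio $ae^{-s}$ lies in $(0,1)$. The only cosmetic difference is that the paper rewrites $a^{n-1}$ as $e^{(n-1)\ln a}$ and combines exponents before summing, whereas you factor out $e^{-s}$ and sum in the ratio $ae^{-s}$ directly; your version also makes the convergence condition more explicit than the paper does.
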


\begin{proof}
From the definition,

\begin{eqnarray*}
\ell _{d}\left\{ a^{n-1}\right\} \left( s\right)
&=&\dsum\limits_{n=1}^{\infty }e^{-sn}a^{n-1} \\
&=&\dsum\limits_{n=1}^{\infty }e^{-sn}e^{\left( n-1\right) \ln a} \\
&=&a^{-1}\dsum\limits_{n=1}^{\infty }e^{-sn}e^{n\ln a} \\
&=&a^{-1}\dsum\limits_{n=1}^{\infty }e^{-\left( s-\ln a\right) n} \\
&=&\frac{1}{e^{s}-a}
\end{eqnarray*}
\end{proof}

\begin{example}
For $a=5$,%
\begin{equation*}
\ell _{d}\left\{ 5^{n-1}\right\} \left( s\right) =\frac{1}{e^{s}-5}
\end{equation*}
\end{example}

\begin{definition}
Let $f:%
\mathbb{N}
\rightarrow 
\mathbb{R}
$ be a sequence. The discrete derivative of of $f$ denoted%
\begin{equation*}
\triangle f\left( n\right) :=f\left( n+1\right) -f\left( n\right)
\end{equation*}
\end{definition}

\begin{proposition}
(Transform of a discrete derivative of a sequence).

\begin{equation*}
\ell _{d}\left\{ \triangle f\left( n\right) \right\} \left( s\right) =\left(
e^{s}-1\right) \ell _{d}\left\{ f\left( n\right) \right\} -f\left( 1\right)
\end{equation*}
\end{proposition}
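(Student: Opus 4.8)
The plan is to reduce the statement to the already-established Corollary on the transform of a unit translate, using the definition of the discrete derivative together with the linearity of the $\sum-$transform. Since $\triangle f(n)$ is by definition $f(n+1)-f(n)$, the left-hand side is the transform of a difference of two sequences, and the first move is to split it into the difference of two separate transforms.

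First I would invoke linearity: directly from the defining series $\ell_d\{g(n)\}(s)=\sum_{n=1}^{\infty}g(n)e^{-sn}$, one has $\ell_d\{\triangle f(n)\}(s)=\ell_d\{f(n+1)\}(s)-\ell_d\{f(n)\}(s)$, since the series attached to $f(n+1)e^{-sn}$ and to $f(n)e^{-sn}$ each converge and a convergent series may be split termwise. Next I would apply the Corollary, which supplies $\ell_d\{f(n+1)\}(s)=e^s\ell_d\{f(n)\}(s)-f(1)$. Substituting this and collecting the two copies of $\ell_d\{f(n)\}(s)$ gives
\[
\ell_d\{\triangle f(n)\}(s)=\left(e^s\ell_d\{f(n)\}(s)-f(1)\right)-\ell_d\{f(n)\}(s)=\left(e^s-1\right)\ell_d\{f(n)\}(s)-f(1),
\]
which is exactly the asserted identity.

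There is no serious obstacle here; the statement is in effect a corollary of the Corollary. The one point deserving care is the justification of linearity, i.e.\ that splitting $\sum\left(f(n+1)-f(n)\right)e^{-sn}$ into two series is legitimate, which requires each piece to converge. I would therefore work in the regime $s>s_{0}$ guaranteed by the Existence Theorem, where both $\ell_d\{f(n)\}$ and (by the translate Proposition) $\ell_d\{f(n+1)\}$ exist finitely; once convergence is secured the termwise splitting and the subsequent algebra are immediate, completing the proof.
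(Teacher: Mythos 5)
Your proof is correct and follows essentially the same route as the paper's: both split $\ell_d\{\triangle f(n)\}(s)$ by linearity into $\ell_d\{f(n+1)\}(s)-\ell_d\{f(n)\}(s)$ and then invoke the translate formula $\ell_d\{f(n+1)\}(s)=e^s\ell_d\{f(n)\}(s)-f(1)$ to conclude. Your added remark about securing convergence for $s>s_0$ before splitting the series is a point the paper leaves implicit, but it does not change the argument.
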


\begin{proof}
\begin{equation*}
\ell _{d}\left\{ \triangle f\left( n\right) \right\} \left( s\right)
=\dsum\limits_{n=1}^{\infty }\triangle f\left( n\right)
e^{-sn}=\dsum\limits_{n=1}^{\infty }\left( f\left( n+1\right) -f(n)\right)
e^{-sn}
\end{equation*}

\begin{eqnarray*}
&=&\dsum\limits_{n=1}^{\infty }f\left( n+1\right)
e^{-sn}-\dsum\limits_{n=1}^{\infty }f\left( n\right) e^{-sn} \\
&=&\ell _{d}\left\{ f\left( n+1\right) \right\} -\ell _{d}\left\{ f\left(
n\right) \right\} \\
&=&\left( e^{s}-1\right) \ell _{d}\left\{ f\left( n\right) \right\} \left(
s\right) -f\left( 1\right)
\end{eqnarray*}
\end{proof}

\bigskip\ \ 

Next we define a discrete convolution operator on sequences which latter
will be useful in solving discrete initial value problems.

\ 

\begin{definition}
Let $f,$ $g:%
\mathbb{N}
\rightarrow 
\mathbb{R}
$ be two sequences . Then the discrete convolution of $f$ and $g$ denoted $%
\left( f\ast g\right) \left( n\right) $ is defined by

\begin{equation*}
\left( f\ast g\right) \left( n\right) :=\dsum\limits_{k=1}^{n-1}f\left(
k\right) g\left( n-k\right)
\end{equation*}
\end{definition}

\begin{example}
$\left( n\ast 1\right) =\frac{n^{2}-n}{2}$
\end{example}

\begin{example}
$\left( n\ast n\right) =\frac{n^{3}-n}{6}$
\end{example}

\begin{proposition}
(Transform of a discrete convolution).

\begin{equation*}
\ell _{d}\left\{ \left( f\ast g\right) \left( n\right) \right\} \left(
s\right) =\ell _{d}\left\{ f\left( n\right) \right\} \ell _{d}\left\{
g\left( n\right) \right\}
\end{equation*}
\end{proposition}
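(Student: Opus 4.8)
The plan is to expand the transform of the convolution directly from the definitions and then rearrange the resulting double series into a product of two transforms. First I would write
\[
\ell _{d}\left\{ \left( f\ast g\right) \left( n\right) \right\} \left(
s\right) =\sum_{n=1}^{\infty }\left( \sum_{k=1}^{n-1}f\left( k\right)
g\left( n-k\right) \right) e^{-sn},
\]
and split the exponential as $e^{-sn}=e^{-sk}e^{-s\left( n-k\right) }$ so
that each summand carries one factor depending only on $k$ and one factor
depending only on $n-k$. This produces the double series with general term
$f\left( k\right) e^{-sk}\,g\left( n-k\right) e^{-s\left( n-k\right) }$.

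The key step is to interchange the order of summation. The index set is
$\left\{ \left( n,k\right) :k\geq 1,\ n\geq k+1\right\} $, so summing over
$n$ first for a fixed $k$ gives
\[
\sum_{k=1}^{\infty }f\left( k\right) e^{-sk}\sum_{n=k+1}^{\infty }g\left(
n-k\right) e^{-s\left( n-k\right) }.
\]
Then the substitution $m=n-k$ turns the inner sum into
$\sum_{m=1}^{\infty }g\left( m\right) e^{-sm}=\ell _{d}\left\{ g\left(
n\right) \right\} \left( s\right) $, which is independent of $k$; it
therefore factors out of the outer sum, leaving
$\ell _{d}\left\{ f\left( n\right) \right\} \left( s\right) \,\ell
_{d}\left\{ g\left( n\right) \right\} \left( s\right) $, as claimed.

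The main obstacle is to justify the interchange of summation, and this is
where I would invoke the existence theorem. Assuming $f$ and $g$ are of
exponential order, say $\mid f\left( n\right) \mid \leq \alpha e^{s_{0}n}$
and $\mid g\left( n\right) \mid \leq \beta e^{s_{0}n}$, the corresponding
double series of absolute values has general term bounded by $\alpha \beta
e^{-\left( s-s_{0}\right) k}e^{-\left( s-s_{0}\right) \left( n-k\right) }$,
so its sum of absolute values is dominated by the product of two convergent
geometric-type series for $s>s_{0}$. Hence the double series converges
absolutely, Fubini's theorem for series applies, and the rearrangement and
reindexing above are legitimate. The remaining factoring is then routine.
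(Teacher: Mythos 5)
Your proposal is correct, and it arrives at the result by the same underlying identity as the paper, but it unpacks that identity rather than citing it, which makes the two proofs worth contrasting. The paper's proof is essentially one line: it invokes the Cauchy product formula for power series, $\left( \sum_{n=1}^{\infty }a_{n}x^{n}\right) \left( \sum_{n=1}^{\infty }b_{n}x^{n}\right) =\sum_{n=2}^{\infty }c_{n}x^{n}$ with $c_{n}=\sum_{k=1}^{n-1}a_{k}b_{n-k}$, substitutes $x=e^{-s}$, and reads off the conclusion, treating the rearrangement of the double series as a known fact. You instead prove that rearrangement directly: splitting $e^{-sn}=e^{-sk}e^{-s\left( n-k\right) }$, summing over the index set $\left\{ \left( n,k\right) :k\geq 1,\ n\geq k+1\right\} $ in the other order, and reindexing by $m=n-k$ so the inner sum factors out. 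The genuine added value in your version is the convergence justification: the Cauchy product formula is not unconditional (it needs, e.g., absolute convergence of at least one factor, as in Mertens' theorem), and the paper never addresses this. Your exponential-order hypotheses $\mid f\left( n\right) \mid \leq \alpha e^{s_{0}n}$, $\mid g\left( n\right) \mid \leq \beta e^{s_{0}n}$ give absolute convergence of the double series for $s>s_{0}$, so Fubini for series legitimizes the interchange. In short, your proof buys self-containedness and rigor on exactly the point the paper glosses over, at the cost of the brevity the paper gains by outsourcing the combinatorial core to a standard power-series fact.
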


\begin{proof}
From the product of the two series: 
\begin{equation*}
\left( \dsum\limits_{n=1}^{\infty }a_{n}x^{n}\right) \left(
\dsum\limits_{n=1}^{\infty }b_{n}x^{n}\right) =\dsum\limits_{n=2}^{\infty
}c_{n}x^{n}
\end{equation*}

where $c_{n}=\dsum\limits_{k=1}^{n-1}a_{k}b_{n-k}$, we have,

\begin{eqnarray*}
\ell _{d}\left\{ \left( f\ast g\right) \left( n\right) \right\} \left(
s\right) &=&\dsum\limits_{n=1}^{\infty }\left( f\ast g\right) \left(
n\right) e^{-sn} \\
&=&\dsum\limits_{n=2}^{\infty }\left( \dsum\limits_{k=1}^{n-1}f\left(
k\right) g\left( n-k\right) \right) e^{-sn} \\
&=&\left( \dsum\limits_{n=1}^{\infty }f\left( n\right) e^{-sn}\right) \left(
\dsum\limits_{n=1}^{\infty }g\left( n\right) e^{-sn}\right) \\
&=&\ell _{d}\left\{ f\left( n\right) \right\} \ell _{d}\left\{ g\left(
n\right) \right\}
\end{eqnarray*}
\end{proof}

\ 

\begin{corollary}
\begin{equation*}
\ell _{d}\left\{ \dsum\limits_{k=1}^{n-1}f\left( k\right) \right\} \left(
s\right) =\frac{s\ell _{d}\left\{ f\left( n\right) \right\} }{e^{s}-1}
\end{equation*}
\end{corollary}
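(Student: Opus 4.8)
The plan is to read the stated corollary as the special case of the convolution proposition in which the second sequence is constant. Taking $g(n)\equiv 1$, the discrete convolution collapses to the partial-sum operation,
\begin{equation*}
(f\ast g)(n)=\sum_{k=1}^{n-1}f(k)\cdot 1=\sum_{k=1}^{n-1}f(k),
\end{equation*}
so that $\ell_d\{\sum_{k=1}^{n-1}f(k)\}(s)=\ell_d\{(f\ast g)(n)\}(s)$. The convolution proposition then factors this product as $\ell_d\{f(n)\}\cdot\ell_d\{g(n)\}$, which reduces the entire identity to the evaluation of the single transform $\ell_d\{1\}(s)$. First, therefore, I would isolate and compute that transform; everything else in the statement follows by substitution, and in particular the precise form of the multiplier attached to $\ell_d\{f(n)\}$ is inherited verbatim from $\ell_d\{1\}(s)$.

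To evaluate $\ell_d\{1\}(s)$ I would invoke the earlier proposition $\ell_d\{a^{n-1}\}(s)=\frac{1}{e^{s}-a}$, valid for $0<a<e^{s}$, and specialize to $a=1$, which is legitimate for every $s>0$ since then $1<e^{s}$. This yields $\ell_d\{1\}(s)=\frac{1}{e^{s}-1}$, a value one may equally confirm from first principles by summing the geometric series $\sum_{n=1}^{\infty}e^{-sn}=\frac{e^{-s}}{1-e^{-s}}=\frac{1}{e^{s}-1}$. Substituting this multiplier into the convolution factorization gives
\begin{equation*}
\ell_d\Big\{\sum_{k=1}^{n-1}f(k)\Big\}(s)=\ell_d\{f(n)\}\cdot\frac{1}{e^{s}-1}.
\end{equation*}

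The main obstacle, and the step deserving the closest scrutiny, is accounting for the factor $s$ that the stated formula carries in its numerator. The specialization above produces the multiplier $\frac{1}{e^{s}-1}$ with no factor of $s$, so any argument establishing the corollary exactly as written must supply an extra factor of $s$ that the convolution-with-$g\equiv 1$ route does not. The analogy with the continuous Laplace transform, where integration introduces a division by $s$ and differentiation a multiplication by $s$, indicates where one would expect such a factor to originate; but in the present discrete calculus the part played by $s$ is carried throughout by $e^{s}-1$, exactly as the discrete-derivative proposition $\ell_d\{\triangle f(n)\}(s)=(e^{s}-1)\ell_d\{f(n)\}-f(1)$ records. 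I would therefore treat the evaluation of $\ell_d\{1\}(s)$ as the decisive point of the whole proof: since the entire identity rests on this one transform, the delicate task is to confirm whether its value is $\frac{1}{e^{s}-1}$ or $\frac{s}{e^{s}-1}$, and pinning down the numerator of that geometric sum is what settles the corollary.
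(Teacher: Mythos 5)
Your route is exactly the paper's: the author's entire proof consists of setting $g\equiv 1$ in the convolution proposition, observing $(f\ast g)(n)=\sum_{k=1}^{n-1}f(k)$, and "taking the transform of both sides" --- the paper never even displays the resulting multiplier, whereas you compute it. Your execution is correct, and the discrepancy you flag is real: since $\ell_d\{1\}(s)=\sum_{n=1}^{\infty}e^{-sn}=\frac{1}{e^{s}-1}$ (equivalently, take $a=1$ in $\ell_d\{a^{n-1}\}(s)=\frac{1}{e^{s}-a}$), the argument yields
\begin{equation*}
\ell_d\Bigl\{\sum_{k=1}^{n-1}f(k)\Bigr\}(s)=\frac{\ell_d\{f(n)\}(s)}{e^{s}-1},
\end{equation*}
with no factor of $s$. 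The corollary as printed is simply misstated, presumably by a slip from the continuous analogue, where integration divides by $s$; in this discrete calculus the role of $\frac{1}{s}$ is played entirely by $\frac{1}{e^{s}-1}$, just as you observe via the discrete-derivative formula. The paper's own later computations confirm the $s$-free version: it uses $\ell_d^{-1}\bigl\{\frac{1}{(e^{s}-1)^{2}}\bigr\}=1\ast 1=n-1$ both in the proposition on $\triangle^{2}f(n)=n$ and in the $\lambda=1$ case of the last recursion, which is precisely your identity applied to $f\equiv 1$. My only criticism is your closing hedge: having summed the geometric series, you have already settled that $\ell_d\{1\}(s)=\frac{1}{e^{s}-1}$, so there is nothing left to ``confirm'' --- you should state outright that no correct argument can produce the extra $s$ and that the statement, not your proof, needs amending.
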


\ 

\begin{proof}
Follows from the fact that choosing $g\equiv 1$, we have

\begin{equation*}
\left( f\ast g\right) \left( n\right) =f\left( n\right) \ast
1=\dsum\limits_{k=1}^{n-1}f\left( k\right)
\end{equation*}%
Then taking the transform of both sides, we have the result.
\end{proof}

\begin{proposition}
For a sequence $f:%
\mathbb{N}
\rightarrow 
\mathbb{R}
$,

\begin{equation*}
\ell _{d}\left\{ nf\left( n\right) \right\} \left( s\right) =-\frac{d}{ds}%
\left( \ell _{d}\left\{ f\left( n\right) \right\} \left( s\right) \right)
\end{equation*}
\end{proposition}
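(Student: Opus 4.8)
The plan is to work directly from the defining series and differentiate it term by term in the parameter $s$. First I would write the transform from its definition,
\[
\ell _{d}\left\{ f\left( n\right) \right\} \left( s\right)
=\dsum\limits_{n=1}^{\infty }f\left( n\right) e^{-sn},
\]
and note that the elementary identity $\frac{d}{ds}e^{-sn}=-n\,e^{-sn}$ is precisely what converts an $s$-derivative into an extra factor of $n$ inside the sum. Granting for the moment that differentiation may be carried inside the summation, I obtain
\[
\frac{d}{ds}\ell _{d}\left\{ f\left( n\right) \right\} \left( s\right)
=\dsum\limits_{n=1}^{\infty }f\left( n\right) \frac{d}{ds}e^{-sn}
=-\dsum\limits_{n=1}^{\infty }n\,f\left( n\right) e^{-sn}
=-\ell _{d}\left\{ nf\left( n\right) \right\} \left( s\right),
\]
and rearranging this equation yields the claimed formula.

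The only step genuinely requiring justification is the interchange of $\frac{d}{ds}$ with the infinite sum. I would supply this via the standard theorem on term-by-term differentiation: it suffices to establish that the differentiated series $\dsum\limits_{n=1}^{\infty }-n\,f\left( n\right) e^{-sn}$ converges uniformly on a neighborhood of each fixed point $s>s_{0}$, where $s_{0}$ is the abscissa furnished by the existence theorem. Assuming the growth bound $\mid f\left( n\right) \mid \leq \alpha e^{s_{0}n}$ under which the transform is defined, the general term of the differentiated series is bounded by $\alpha\, n\, e^{\left( s_{0}-s\right) n}$. Fixing any $s_{1}>s_{0}$ and restricting to $s\geq s_{1}$, this is dominated by the $s$-independent convergent series $\dsum\limits_{n=1}^{\infty }\alpha\, n\, e^{\left( s_{0}-s_{1}\right) n}$, which converges because the factor $e^{s_{0}-s_{1}}<1$ decays geometrically and overwhelms the polynomial factor $n$. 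A Weierstrass $M$-test then gives the required uniform convergence on $s\geq s_{1}$, and since $s_{1}>s_{0}$ was arbitrary this licenses the termwise differentiation at every $s>s_{0}$.

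The main obstacle, though a mild one, is exactly this convergence bookkeeping: one must confirm that attaching the factor $n$ does not push the series outside its region of convergence. I expect no real difficulty here, since exponential decay dominates polynomial growth, so the abscissa of convergence is unchanged by multiplication by $n$. Once uniform convergence of the differentiated series is secured, the interchange is automatic and the identity follows at once.
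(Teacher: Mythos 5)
Your proof is correct and follows essentially the same route as the paper: both arguments differentiate the defining series $\sum_{n=1}^{\infty} f(n)e^{-sn}$ term by term, using $\frac{d}{ds}e^{-sn} = -n e^{-sn}$ to produce the extra factor of $n$, and then rearrange. The only difference is that you supply the justification for interchanging $\frac{d}{ds}$ with the infinite sum (via the growth bound $\mid f(n)\mid \leq \alpha e^{s_{0}n}$ and the Weierstrass $M$-test on $s \geq s_{1} > s_{0}$), a step the paper carries out without comment.
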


\begin{proof}
\begin{eqnarray*}
\frac{d}{ds}\left( \ell _{d}\left\{ f\left( n\right) \right\} \left(
s\right) \right) &=&\frac{d}{ds}\dsum\limits_{n=1}^{\infty }f\left( n\right)
e^{-sn} \\
&=&\dsum\limits_{n=1}^{\infty }\left( -nf\left( n\right) e^{-sn}\right) \\
&=&-\ell _{d}\left\{ nf\left( n\right) \right\} \left( s\right)
\end{eqnarray*}
\end{proof}

\begin{corollary}
For $k\in 
\mathbb{N}
$, 
\begin{equation*}
\ell _{d}\left\{ n^{k}f\left( n\right) \right\} \left( s\right) =\left(
-1\right) ^{k}\frac{d^{k}}{ds^{k}}\ell _{d}\left\{ f\left( n\right) \right\}
\left( s\right)
\end{equation*}
\end{corollary}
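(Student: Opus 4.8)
The plan is to proceed by induction on $k$, using the immediately preceding proposition both as the base case and as the engine driving the inductive step. The base case $k=1$ is precisely the statement
$$\ell_d\{nf(n)\}(s) = -\frac{d}{ds}\,\ell_d\{f(n)\}(s),$$
which matches the claimed formula since $(-1)^1 = -1$ and $\frac{d^1}{ds^1} = \frac{d}{ds}$.

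For the inductive step, I would assume the identity holds for some fixed $k \geq 1$ and deduce it for $k+1$. The key observation is the factorization $n^{k+1}f(n) = n\cdot\bigl(n^k f(n)\bigr)$, which lets me treat $g(n) := n^k f(n)$ as a new sequence and apply the preceding proposition to $g$ in place of $f$. This gives
$$\ell_d\{n^{k+1}f(n)\}(s) = \ell_d\{n\cdot g(n)\}(s) = -\frac{d}{ds}\,\ell_d\{g(n)\}(s) = -\frac{d}{ds}\,\ell_d\{n^k f(n)\}(s).$$
Substituting the inductive hypothesis $\ell_d\{n^k f(n)\}(s) = (-1)^k \frac{d^k}{ds^k}\ell_d\{f(n)\}(s)$ and absorbing the extra derivative together with the sign yields
$$\ell_d\{n^{k+1}f(n)\}(s) = -\frac{d}{ds}\left((-1)^k\frac{d^k}{ds^k}\ell_d\{f(n)\}(s)\right) = (-1)^{k+1}\frac{d^{k+1}}{ds^{k+1}}\ell_d\{f(n)\}(s),$$
which is the asserted formula for $k+1$ and completes the induction.

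The only genuine subtlety, and the step I expect to require the most care, is the legitimacy of differentiating the defining series $\sum_{n=1}^{\infty} f(n)e^{-sn}$ term by term $k$ times. Each invocation of the preceding proposition silently interchanges $\frac{d}{ds}$ with the infinite summation, and iterating this interchange $k$ times demands that the successively differentiated series converge uniformly on compact subsets of $(s_0,\infty)$. Under the growth hypothesis $\mid f(n)\mid \leq \alpha e^{s_0 n}$ supplied by the existence theorem, the $k$-th termwise derivative introduces a factor $n^k$, and the resulting series $\sum_{n=1}^{\infty} n^k \alpha\, e^{(s_0-s)n}$ still converges for $s > s_0$, since the polynomial factor $n^k$ is dominated by the exponential decay $e^{(s_0-s)n}$. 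This uniform convergence justifies every interchange, so no analytic obstacle arises beyond the one already implicit in the base case.
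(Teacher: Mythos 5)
Your induction is correct and is exactly the argument the paper implicitly relies on: the paper states this corollary without proof, immediately after the $k=1$ proposition, clearly intending that it follows by iterating that proposition, which is precisely what your factorization $n^{k+1}f(n)=n\cdot\bigl(n^{k}f(n)\bigr)$ formalizes. Your closing remark justifying the repeated term-by-term differentiation under the growth bound $\mid f(n)\mid\leq\alpha e^{s_{0}n}$ is a point of rigor the paper never addresses at all, so your write-up is, if anything, more complete than the source.
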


\begin{remark}
By taking $f\equiv 1$, we get the relation:

\begin{eqnarray*}
\ \ell _{d}\left\{ n^{k}\right\} \left( s\right) &=&\left( -1\right) ^{k}%
\frac{d^{k}}{ds^{k}}\ell _{d}\left\{ 1\right\} \left( s\right) \\
&=&\left( -1\right) ^{k}\frac{d^{k}}{ds^{k}}\left( \frac{1}{e^{s}-1}\right)
\end{eqnarray*}
\end{remark}

\ \ \ \ \ \ \ \ \ \ \ \ \ \ \ \ \ \ \ \ \ \ \ \ \ \ \ \ \ \ \ \ \ \ \ \ \ \
\ \ \ \ \ \ \ \ 

\section{Ivps of discrete differential equations.}

In this section we solve initial value problems of discrete differential
equations using the $\sum -$ transform and the Fibonacci numbers

\begin{proposition}
(\cite{DL}) 
\begin{equation*}
\ell _{d}\left\{ \frac{1}{n}\right\} \left( s\right) =s-\ln \left(
e^{s}-1\right)
\end{equation*}%
\ for $s>0$.
\end{proposition}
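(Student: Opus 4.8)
The plan is to compute the transform directly from the definition and then recognize the resulting series as a classical logarithmic (Mercator) series. Starting from
\begin{equation*}
\ell _{d}\left\{ \frac{1}{n}\right\} \left( s\right) =\dsum\limits_{n=1}^{\infty }\frac{1}{n}e^{-sn}=\dsum\limits_{n=1}^{\infty }\frac{\left( e^{-s}\right) ^{n}}{n},
\end{equation*}
I would set $x=e^{-s}$ and observe that, since $s>0$, we have $0<x<1$, so $x$ lies strictly inside the interval of convergence of the power series for the logarithm.

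Next I would invoke the standard expansion $-\ln \left( 1-x\right) =\dsum\limits_{n=1}^{\infty }\frac{x^{n}}{n}$, valid for $\left\vert x\right\vert <1$, to identify the series above as $-\ln \left( 1-e^{-s}\right) $. This is the conceptual heart of the argument: the $\sum -$transform of the harmonic sequence $1/n$ is nothing but the logarithmic series evaluated at $e^{-s}$.

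The only remaining work is the algebraic simplification bringing the answer into the stated form. Writing $1-e^{-s}=\dfrac{e^{s}-1}{e^{s}}$ and using the additivity of the logarithm gives
\begin{equation*}
-\ln \left( 1-e^{-s}\right) =-\ln \left( \frac{e^{s}-1}{e^{s}}\right) =\ln \left( e^{s}\right) -\ln \left( e^{s}-1\right) =s-\ln \left( e^{s}-1\right) ,
\end{equation*}
which is the desired identity for $s>0$.

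I do not anticipate a genuine obstacle here; the result is essentially a restatement of the Mercator series. The one point deserving care is the justification for termwise handling of the series, but this is immediate because $0<e^{-s}<1$ keeps us strictly inside the radius of convergence, where the logarithmic series converges absolutely; convergence of the transform itself is already guaranteed by the existence theorem proved earlier, since $1/n$ is dominated by a constant and hence satisfies the growth bound $\left\vert f\left( n\right) \right\vert \leq \alpha e^{s_{0}n}$.
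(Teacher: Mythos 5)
Your proof is correct, and it is worth noting that the paper itself gives no proof of this proposition at all: it is stated with a citation to the external reference \cite{DL}, so there is no internal argument to compare against. Your computation is the natural (and essentially unique) direct one: write $\ell_{d}\{1/n\}(s)=\sum_{n=1}^{\infty}\frac{(e^{-s})^{n}}{n}$, recognize the Mercator series $-\ln(1-x)=\sum_{n=1}^{\infty}\frac{x^{n}}{n}$ at $x=e^{-s}\in(0,1)$, and simplify $-\ln(1-e^{-s})=\ln\!\left(\frac{e^{s}}{e^{s}-1}\right)=s-\ln(e^{s}-1)$. All steps are valid: the substitution stays strictly inside the radius of convergence, where the series converges absolutely, so the identification with the logarithm needs no further justification. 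Your closing appeal to the paper's existence theorem (with $|1/n|\leq 1\leq e^{s_{0}n}$ for arbitrarily small $s_{0}>0$) is fine but actually redundant, since absolute convergence of the geometric-type series already follows from $0<e^{-s}<1$. In short, your proposal supplies a complete, self-contained proof of a fact the paper merely imports.
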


\begin{proposition}
For $n\geq 2$, the IVP :%
\begin{equation*}
\left\{ 
\begin{array}{c}
\triangle f\left( n\right) =\frac{1}{n^{2}} \\ 
f\left( 2\right) =2%
\end{array}%
\right.
\end{equation*}

has solution given by%
\begin{equation*}
f\left( n\right) =1+\dsum\limits_{k=1}^{n-1}\frac{1}{k^{2}}
\end{equation*}
\end{proposition}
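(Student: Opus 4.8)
The plan is to run the sequence through the $\sum-$transform exactly as the machinery of Section 2 was built to handle. First I would convert the given data into the form the transform expects: the rule for $\ell_d\{\triangle f(n)\}$ is phrased in terms of $f(1)$, whereas the initial condition here is $f(2)=2$. Evaluating the difference equation at $n=1$ gives $\triangle f(1)=f(2)-f(1)=1/1^2=1$, so $f(1)=f(2)-1=1$. This bookkeeping step is the one place where the ``$n\ge 2$'' phrasing matters, and it is worth recording explicitly before anything is transformed.

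Next I would apply $\ell_d$ to both sides of $\triangle f(n)=1/n^2$. Writing $F(s)=\ell_d\{f(n)\}(s)$ and invoking the discrete-derivative rule together with $f(1)=1$, this yields
\[
(e^s-1)F(s)-1=\ell_d\Bigl\{\tfrac{1}{n^2}\Bigr\}(s),
\]
so that
\[
F(s)=\frac{1}{e^s-1}+\frac{1}{e^s-1}\,\ell_d\Bigl\{\tfrac{1}{n^2}\Bigr\}(s).
\]
I would then recognize each summand as a transform already computed. The first is $\ell_d\{1\}(s)$, from the Proposition on $\ell_d\{a^{n-1}\}$ with $a=1$. For the second, set $h(n)=1/n^2$ and use that convolving with the constant sequence $1$ produces partial sums, $(h\ast 1)(n)=\sum_{k=1}^{n-1}h(k)$; since $\ell_d\{1\}(s)=1/(e^s-1)$, the transform-of-convolution Proposition gives $\frac{1}{e^s-1}\ell_d\{h\}(s)=\ell_d\bigl\{\sum_{k=1}^{n-1}\tfrac{1}{k^2}\bigr\}(s)$. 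Combining the two pieces, $F(s)=\ell_d\bigl\{1+\sum_{k=1}^{n-1}\tfrac{1}{k^2}\bigr\}(s)$.

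Finally I would invert: reading off the sequence whose transform is $F(s)$ gives $f(n)=1+\sum_{k=1}^{n-1}\frac{1}{k^2}$, and a one-line check confirms $f(2)=1+1=2$ and $\triangle f(n)=1/n^2$. The step I expect to be the genuine obstacle is precisely this inversion: concluding ``equal transforms imply equal sequences'' requires injectivity of $\ell_d$, which the paper has not isolated as a lemma. This is really the statement that the substitution $x=e^{-s}$ turns $\ell_d\{f\}$ into the power series $\sum f(n)x^n$, whose coefficients are uniquely determined. I would either invoke that power-series uniqueness to justify the inversion, or, to stay fully self-contained, bypass the transform and verify the displayed formula directly by telescoping $\triangle f$, appealing to the uniqueness of solutions of a first-order recurrence with one prescribed value.
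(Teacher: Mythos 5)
Your proof is correct, but it takes a genuinely different route from the paper's. The paper does not transform the equation as given; it first rewrites it as $n\triangle f(n)=\frac{1}{n}$ so that it can use its formula $\ell_{d}\left\{ \frac{1}{n}\right\} (s)=s-\ln \left( e^{s}-1\right)$ together with the multiplication rule $\ell_{d}\left\{ n\,g(n)\right\} (s)=-\frac{d}{ds}\ell_{d}\left\{ g(n)\right\} (s)$, which converts the IVP into a first-order linear ODE in the transform variable,
\[
\frac{d}{ds}\ell _{d}\left\{ f\left( n\right) \right\} \left( s\right) +\frac{e^{s}}{e^{s}-1}\ell _{d}\left\{ f\left( n\right) \right\} \left( s\right) =-\frac{s-\ln \left( e^{s}-1\right) }{e^{s}-1},
\]
solved with integrating factor $e^{s}-1$ and then inverted via convolution. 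You instead transform the equation directly and never need any explicit formula for $\ell_{d}\left\{ \frac{1}{n^{2}}\right\}$: you carry it as an opaque symbol and recognize $\frac{1}{e^{s}-1}\ell_{d}\left\{ \frac{1}{n^{2}}\right\}$ as the transform of the partial sums via the convolution proposition. Your route is more elementary (no $s$-domain ODE, no differentiation rule, no transform of $\frac{1}{n}$) and is also tighter on two points where the paper is loose: the paper never explains how its constant of integration (the leading $\frac{1}{e^{s}-1}$ term) is fixed by the data, whereas you extract $f(1)=1$ from $f(2)=2$ explicitly before transforming; and the paper silently assumes injectivity of $\ell_{d}$ at the inversion step, which you correctly isolate and justify by power-series uniqueness under $x=e^{-s}$ (with the telescoping verification as a fully self-contained fallback). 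You were also right to re-derive the partial-sum rule from the convolution proposition rather than cite the paper's corollary on $\ell_{d}\left\{ \sum_{k=1}^{n-1}f(k)\right\}$, which carries a spurious factor of $s$; using that corollary verbatim would have broken the computation. What the paper's approach buys is a demonstration of its heavier machinery on a nontrivial example; what yours buys is economy and rigor.
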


\begin{proof}
Re-writing the difference equation as : $n\triangle f\left( n\right) =\frac{1%
}{n}$, taking the transform of both sides and using corollary $3.3$ we get

\begin{equation*}
\frac{d}{ds}\ell _{d}\left\{ f\left( n\right) \right\} \left( s\right) +%
\frac{e^{s}}{e^{s}-1}\ell _{d}\left\{ f\left( n\right) \right\} \left(
s\right) =-\frac{s-\ln \left( e^{s}-1\right) }{e^{s}-1}.
\end{equation*}%
Again solving for $\ell _{d}\left\{ f\left( n\right) \right\} \left(
s\right) $, we have

\begin{equation*}
\ \ell _{d}\left\{ f\left( n\right) \right\} \left( s\right) =\frac{1}{%
e^{s}-1}-\frac{1}{e^{s}-1}\int \left( s-\ln \left( e^{s}-1\right) \right) ds.
\end{equation*}

\begin{equation*}
\Rightarrow \text{ }f\left( n\right) =1-\ell _{d}^{-1}\left\{ \frac{1}{%
e^{s}-1}\right\} \ast \ell _{d}^{-1}\left\{ \int \left( s-\ln \left(
e^{s}-1\right) \right) ds\right\}
\end{equation*}

\begin{eqnarray*}
&=&1-\left( 1\ast \left( -\frac{1}{n^{2}}\right) \right) \\
&=&1+\dsum\limits_{k=1}^{n-1}\frac{1}{k^{2}}.
\end{eqnarray*}
\end{proof}

\ \ \ 

\begin{proposition}
The second order IVP:%
\begin{equation*}
\left\{ 
\begin{array}{c}
\triangle ^{2}f\left( n\right) =n \\ 
\text{ \ }f\left( 1\right) =1,\text{ \ }\triangle f\left( 1\right) =2%
\end{array}%
\right.
\end{equation*}

has solution given by :%
\begin{equation*}
f\left( n\right) =2n-1+\frac{n\left( n-1\right) \left( n-2\right) }{6}
\end{equation*}
\end{proposition}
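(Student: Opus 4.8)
The plan is to apply the $\sum-$transform to both sides of $\triangle^2 f(n)=n$, solve the resulting algebraic equation for $\ell_d\{f(n)\}(s)$, and then invert. First I would obtain the transform of the second discrete derivative by iterating the derivative rule $\ell_d\{\triangle g(n)\}(s)=(e^s-1)\ell_d\{g(n)\}(s)-g(1)$. Applying it with $g=\triangle f$ and then with $g=f$ gives
$$\ell_d\{\triangle^2 f(n)\}(s)=(e^s-1)^2\,\ell_d\{f(n)\}(s)-(e^s-1)f(1)-\triangle f(1).$$
For the right-hand side, the transform of the identity sequence follows from $\ell_d\{1\}(s)=1/(e^s-1)$ together with the multiplication rule $\ell_d\{nf(n)\}(s)=-\frac{d}{ds}\ell_d\{f(n)\}(s)$, giving $\ell_d\{n\}(s)=e^s/(e^s-1)^2$.

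Substituting the initial data $f(1)=1$ and $\triangle f(1)=2$ then produces the single algebraic equation
$$(e^s-1)^2\,\ell_d\{f(n)\}(s)-(e^s-1)-2=\frac{e^s}{(e^s-1)^2},$$
which I would solve to get
$$\ell_d\{f(n)\}(s)=\frac{e^s}{(e^s-1)^4}+\frac{e^s+1}{(e^s-1)^2}.$$
The second summand inverts easily: writing $\frac{e^s+1}{(e^s-1)^2}=\frac{e^s}{(e^s-1)^2}+\frac{1}{(e^s-1)^2}$ and using $\ell_d\{n\}(s)=e^s/(e^s-1)^2$ together with $\ell_d\{n-1\}(s)=1/(e^s-1)^2$ (the latter from the convolution rule with $g\equiv 1$, since $(1\ast 1)(n)=n-1$), this term corresponds to the sequence $2n-1$.

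The hard part will be inverting $\frac{e^s}{(e^s-1)^4}$, since a fourth power of $(e^s-1)$ is not on the short list of transforms already computed. I see two routes. The first uses the higher-derivative relation $\ell_d\{n^k\}(s)=(-1)^k\frac{d^k}{ds^k}\,\frac{1}{e^s-1}$ to compute $\ell_d\{n^2\}(s)$ and $\ell_d\{n^3\}(s)$ explicitly over the common denominator $(e^s-1)^4$, and then to find the linear combination of $\{n,n^2,n^3\}$ whose transform collapses to a single term; a direct numerator computation shows $\ell_d\{n^3\}(s)-3\ell_d\{n^2\}(s)+2\ell_d\{n\}(s)=6e^s/(e^s-1)^4$, so that $\frac{e^s}{(e^s-1)^4}=\ell_d\{\tfrac{1}{6}(n^3-3n^2+2n)\}(s)=\ell_d\{\tfrac{n(n-1)(n-2)}{6}\}(s)$. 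The second route is purely convolutional: factoring $\frac{e^s}{(e^s-1)^4}=\ell_d\{n\}(s)\,\ell_d\{n-1\}(s)$ and evaluating the discrete convolution $(n\ast(n-1))(n)=\sum_{k=1}^{n-1}k(n-k-1)$, which also sums to $\frac{n(n-1)(n-2)}{6}$. Either way, adding the two inverted pieces yields $f(n)=2n-1+\frac{n(n-1)(n-2)}{6}$, as claimed. The main care required is bookkeeping in the fourth-power inversion — checking that the spurious $e^{3s}$ and $e^{2s}$ contributions cancel so that only the $e^s$ term survives — rather than any conceptual obstacle.
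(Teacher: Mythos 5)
Your proposal is correct and takes essentially the same route as the paper: both apply the transform to the equation (your iterated derivative rule produces exactly the paper's identity $(e^{s}-1)^{2}\ell _{d}\{f(n)\}(s)-e^{s}-1=e^{s}/(e^{s}-1)^{2}$), solve for $\ell _{d}\{f(n)\}(s)$, and invert term by term, with your convolution evaluation $(n\ast (n-1))(n)=\frac{n(n-1)(n-2)}{6}$ being precisely the step the paper compresses into ``using convolutions.'' Your alternative inversion of $e^{s}/(e^{s}-1)^{4}$ via derivatives of $1/(e^{s}-1)$ is a valid cross-check but not a departure from the paper's method.
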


\begin{proof}
First,%
\begin{eqnarray*}
\triangle ^{2}f\left( n\right) &=&\triangle \left( \triangle f\left(
n\right) \right) \\
&=&f\left( n+2\right) -2f\left( n+1\right) +f\left( n\right)
\end{eqnarray*}
and using the initial conditions we get:

\begin{equation*}
\ell _{d}\left\{ \triangle ^{2}f\left( n\right) \right\} \left( s\right)
=\left( e^{2s}-2e^{s}+1\right) \ell _{d}\left\{ f\left( n\right) \right\}
\left( s\right) -e^{s}-1.
\end{equation*}

\begin{equation*}
\Rightarrow \text{ \ \ \ \ \ \ \ }\left( e^{s}-1\right) ^{2}\ell _{d}\left\{
f\left( n\right) \right\} \left( s\right) -e^{s}-1=\frac{e^{s}}{\left(
e^{s}-1\right) ^{2}}
\end{equation*}

\begin{equation*}
\Rightarrow \text{ \ \ \ \ \ \ \ }\ell _{d}\left\{ f\left( n\right) \right\}
\left( s\right) =\frac{1}{\left( e^{s}-1\right) ^{2}}+\frac{e^{s}}{\left(
e^{s}-1\right) ^{2}}+\frac{e^{s}}{\left( e^{s}-1\right) ^{4}}
\end{equation*}

\ 

Then taking the inverse transform and using convolutions, we get the
solution as :

\begin{eqnarray*}
f\left( n\right) &=&\left( 1\ast 1\right) +n+\frac{n\left( n-1\right) \left(
n-2\right) }{6} \\
&=&2n-1+\frac{n\left( n-1\right) \left( n-2\right) }{6}
\end{eqnarray*}
\end{proof}

\ \ 

\section{ The Fibonacci numbers via the $\sum -$transform.}

\begin{proposition}
(The main result ) The sequence of numbers: 
\begin{equation*}
1,\text{ }1,\text{ }2,\text{ }3,\text{ }5,\text{ }8,\text{ }13,\text{ }....
\end{equation*}%
usually called the Fibonacci sequence are generated by the formula : 
\begin{equation*}
a_{n}=\frac{\left( 1+\sqrt{5}\right) ^{n}-\left( 1-\sqrt{5}\right) ^{n}}{%
2^{n}\sqrt{5}}
\end{equation*}

where $n\in 
\mathbb{N}
$.
\end{proposition}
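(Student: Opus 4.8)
The plan is to treat the defining recurrence $a_{n+2}=a_{n+1}+a_n$ with $a_1=a_2=1$ as a second order discrete initial value problem and to solve it with the $\sum-$transform exactly as in the previous section. First I would note that since the Fibonacci numbers grow no faster than $\left(\tfrac{1+\sqrt{5}}{2}\right)^n$, the Existence Theorem guarantees that $F(s):=\ell_d\{a_n\}(s)$ is defined for all sufficiently large $s$. I would then apply the transform to both sides of the recurrence, using the translation Proposition for the shifts $k=1$ and $k=2$. With $a_1=a_2=1$ these give $\ell_d\{a_{n+1}\}=e^sF(s)-1$ and $\ell_d\{a_{n+2}\}=e^{2s}F(s)-e^s-1$, so the recurrence turns into the algebraic identity $e^{2s}F(s)-e^s-1=\left(e^sF(s)-1\right)+F(s)$. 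Solving for $F$ yields
\begin{equation*}
F(s)=\frac{e^s}{e^{2s}-e^s-1}.
\end{equation*}

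Next I would invert this transform by partial fractions. Writing $x=e^s$, the denominator factors as $x^2-x-1=(x-\phi)(x-\psi)$ with $\phi=\frac{1+\sqrt{5}}{2}$ and $\psi=\frac{1-\sqrt{5}}{2}$, and since $\phi-\psi=\sqrt{5}$ the decomposition reads $\frac{x}{(x-\phi)(x-\psi)}=\frac{1}{\sqrt{5}}\left(\frac{\phi}{x-\phi}-\frac{\psi}{x-\psi}\right)$. The Proposition $\ell_d\{a^{n-1}\}(s)=\frac{1}{e^s-a}$ then lets me read each term off as a transform, namely $\frac{\phi}{e^s-\phi}=\ell_d\{\phi\cdot\phi^{n-1}\}=\ell_d\{\phi^n\}$ and likewise $\frac{\psi}{e^s-\psi}=\ell_d\{\psi^n\}$. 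Hence $F(s)=\ell_d\left\{\frac{\phi^n-\psi^n}{\sqrt{5}}\right\}(s)$, and comparing the two sequences with the same transform gives
\begin{equation*}
a_n=\frac{\phi^n-\psi^n}{\sqrt{5}}=\frac{\left(1+\sqrt{5}\right)^n-\left(1-\sqrt{5}\right)^n}{2^n\sqrt{5}},
\end{equation*}
which is the claimed formula.

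The main obstacle I expect is justifying the two steps that lie behind the symbol $\ell_d^{-1}$. One point is that the geometric-series Proposition was stated for $0<a<e^s$, whereas the root $\psi=\frac{1-\sqrt{5}}{2}$ is negative; here I would observe that the series $\sum_{n\ge 1}a^{n-1}e^{-sn}$ still sums to $\frac{1}{e^s-a}$ whenever $|a|<e^s$, which holds for $a=\psi$ since $|\psi|<1$, so the inversion of that term is legitimate for large $s$. The more essential point is \emph{injectivity} of the transform: the step ``equal transforms imply equal sequences'' must be justified, and it follows because $F(s)=\sum_{n\ge 1}a_n z^n$ with $z=e^{-s}$ is a power series in $z$, so its coefficients $a_n$ are uniquely determined on any interval where it converges. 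Once these two facts are in place, the termwise inversion and the final comparison of sequences are rigorous and the proof is complete.
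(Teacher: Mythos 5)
Your proposal is correct and follows essentially the same route as the paper: apply the $\sum-$transform to the recurrence via the translation proposition, solve algebraically for $\ell_{d}\{a_{n}\}(s)=\frac{e^{s}}{e^{2s}-e^{s}-1}$, factor the denominator at $\frac{1\pm\sqrt{5}}{2}$, and invert by partial fractions using $\ell_{d}\{a^{n-1}\}(s)=\frac{1}{e^{s}-a}$. The only differences are minor or in your favor: the paper first solves the problem with general initial conditions $a_{1}=a(1)$, $a_{2}=a(2)$ and specializes to $a_{1}=a_{2}=1$ at the end, and it silently passes over the two justifications you supply explicitly, namely the validity of the geometric-series transform for the negative root $\psi=\frac{1-\sqrt{5}}{2}$ and the injectivity of the transform that underlies the inversion step.
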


\ 

\begin{proof}
First, we all know, the numbers can be represented in a recursive way by :

\begin{equation*}
\left\{ 
\begin{array}{c}
a_{n+2}=a_{n+1}+a_{n}\text{ \ for }n=1,2,3,.... \\ 
\text{\ }a_{1}=1,\text{ }a_{2}=1.%
\end{array}%
\right.
\end{equation*}

\ 

In tern the above recursive definition can also be described as an initial
value problem of a second order discrete difference equation given by :

\ 
\begin{equation*}
\left\{ 
\begin{array}{c}
\Delta ^{2}a_{n}+\Delta a_{n}=a_{n} \\ 
a_{1}=1,\text{ }a_{2}=1%
\end{array}%
\right.
\end{equation*}

\ 

We will see in a moment that this sequence as a special case of a general
sequence that will be generated from the one whose formula will be obtained
from the recursive expression :

\ 
\begin{equation*}
\left\{ 
\begin{array}{c}
a_{n+2}=a_{n+1}+a_{n}\text{ \ for }n=1,2,3,.... \\ 
\text{\ }a_{1}=a(1),\text{ }a_{2}=a(2)%
\end{array}%
\right.
\end{equation*}

\ 

as two initial conditions. The pattern that will be observed from the latter
sequence is that the general term of the sequence will appear as a linear
combination of the two initial conditions $a_{1}$ and $a_{2}$:

\ 
\begin{equation*}
a_{n}=\gamma _{n}a_{1}+\beta _{n}a_{2}
\end{equation*}

\ \ 

in which $\gamma _{n},$ $\beta _{n}$ are themselves obtained as Fibonacci
numbers of the respective coefficients of $a_{1}$ and $a_{2}$. I call these
numbers Fibonacci - like numbers.

\ 

Applying the discrete Laplace transform on both sides of the later recursive
equation :

\begin{eqnarray*}
\ell _{d}\{a_{n+2}\}(s) &=&\ell _{d}\{a_{n+1}+a_{n}\}(s) \\
&=&\ell _{d}\{a_{n+1}\}(s)+\ell _{d}\{a_{n}\}(s).
\end{eqnarray*}

\ 

From results of (\cite{DL}) we have transforms of these types:

\begin{eqnarray*}
\ell _{d}\{a_{n+2}\}(s) &=&e^{2s}\ell _{d}\{a_{n}\}(s)-e^{s}a_{1}-a_{2} \\
\ell _{d}\{a_{n+1}\}(s)+\ell _{d}\{a_{n}\}(s) &=&e^{s}\ell
_{d}\{a_{n}\}(s)-a_{1}+\ell _{d}\{a_{n}\}(s)
\end{eqnarray*}

$\Rightarrow $%
\begin{equation*}
e^{2s}\ell _{d}\{a_{n}\}(s)-e^{s}a_{1}-a_{2}=e^{s}\ell
_{d}\{a_{n}\}(s)-a_{1}+\ell _{d}\{a_{n}\}(s)
\end{equation*}

Therefore rearranging, we have : 
\begin{equation*}
\ell _{d}\{a_{n}\}(s)=\frac{a_{1}e^{s}+a_{2}-a_{1}}{e^{2s}-e^{s}-1}
\end{equation*}

But 
\begin{equation*}
e^{2s}-e^{s}-1=\left( e^{s}-\frac{\left( 1+\sqrt{5}\right) }{2}\right)
\left( e^{s}-\frac{\left( 1-\sqrt{5}\right) }{2}\right)
\end{equation*}%
and

\begin{eqnarray*}
\frac{a_{1}e^{s}+a_{2}-a_{1}}{e^{2s}-e^{s}-1} &=&\frac{a_{1}e^{s}+a_{2}-a_{1}%
}{\left( e^{s}-\frac{\left( 1+\sqrt{5}\right) }{2}\right) \left( e^{s}-\frac{%
\left( 1-\sqrt{5}\right) }{2}\right) } \\
&=&\frac{a_{1}\left( \sqrt{5}-1\right) +2a_{2}}{2\sqrt{5}\left( e^{s}-\frac{%
\left( 1+\sqrt{5}\right) }{2}\right) }+\frac{a_{1}\left( \sqrt{5}+1\right)
-2a_{2}}{2\sqrt{5}\left( e^{s}-\frac{\left( 1-\sqrt{5}\right) }{2}\right) }
\end{eqnarray*}

\ 

Therefore taking the inverse discrete Laplace transform we have :

\begin{equation*}
a_{n}=\ell _{d}^{-1}\left\{ \frac{a_{1}\left( \sqrt{5}-1\right) +2a_{2}}{2%
\sqrt{5}\left( e^{s}-\frac{\left( 1+\sqrt{5}\right) }{2}\right) }+\frac{%
a_{1}\left( \sqrt{5}+1\right) -2a_{2}}{2\sqrt{5}\left( e^{s}-\frac{\left( 1-%
\sqrt{5}\right) }{2}\right) }\right\}
\end{equation*}

\begin{equation*}
\ \ \ =\frac{1}{2^{n}\sqrt{5}}\left( 
\begin{array}{c}
\left( a_{1}\left( \sqrt{5}-1\right) +2a_{2}\right) \left( 1+\sqrt{5}\right)
^{n-1} \\ 
+\left( a_{1}\left( \sqrt{5}+1\right) -2a_{2}\right) \left( 1-\sqrt{5}%
\right) ^{n-1}%
\end{array}%
\right)
\end{equation*}

\ 

Therefore, the sequence

\begin{equation*}
a_{n}=\frac{1}{2^{n}\sqrt{5}}\left( 
\begin{array}{c}
\left( a_{1}\left( \sqrt{5}-1\right) +2a_{2}\right) \left( 1+\sqrt{5}\right)
^{n-1} \\ 
+\left( a_{1}\left( \sqrt{5}+1\right) -2a_{2}\right) \left( 1-\sqrt{5}%
\right) ^{n-1}%
\end{array}%
\right)
\end{equation*}

\ 

is a solution to the recursive equation of the Fibonacci-like numbers.
Rearranging the expression, we get linear combinations of $a_{1}$ and $a_{2}$
as :

\begin{eqnarray*}
a_{n} &=&\frac{\left( \left( \sqrt{5}-1\right) \left( 1+\sqrt{5}\right)
^{n-1}+\left( \sqrt{5}+1\right) ^{n-1}-\left( 1-\sqrt{5}\right)
^{n-1}\right) }{2^{n}\sqrt{5}}a_{1} \\
&&+\frac{\left( \left( 1+\sqrt{5}\right) ^{n-1}-\left( 1-\sqrt{5}\right)
^{n-1}\right) }{2^{n-1}\sqrt{5}}a_{2}
\end{eqnarray*}

\ 

with coefficients of $a_{1}$ and $a_{2}$ being represented by:

\ 
\begin{equation*}
\gamma _{n}=\frac{\left( \left( \sqrt{5}-1\right) \left( 1+\sqrt{5}\right)
^{n-1}+\left( \sqrt{5}+1\right) ^{n-1}-\left( 1-\sqrt{5}\right)
^{n-1}\right) }{2^{n}\sqrt{5}}
\end{equation*}%
and 
\begin{equation*}
\beta _{n}=\frac{\left( \left( 1+\sqrt{5}\right) ^{n-1}-\left( 1-\sqrt{5}%
\right) ^{n-1}\right) }{2^{n-1}\sqrt{5}}
\end{equation*}

\ \ \ \ 

We note that when $n=1$, the coefficient of $a_{1}$ is $1$ and that of $%
a_{2} $ is zero and therefore the term will be just $a_{1}$. Like wise when $%
n=2$, the coefficient of $a_{1}$ is zero and that of $a_{2}$ is one and
again the term will be $a_{2}.$ The coefficients themselves are generated as
a sequence which are Fibonacci like numbers.

\ 

Then coming back to our original question, extracting a sequence that
generates the Fibonacci sequence, we look at the above general sequence but
with two fixed initial conditions :

\ 
\begin{equation*}
a_{1}=1=a_{2}
\end{equation*}

\ 

These initial conditions provide the following sequence which generates the
well known Fibonacci numbers (\ref{Fib})\ that are known to be integers:

\begin{equation*}
a_{n}=\frac{\left( 1+\sqrt{5}\right) ^{n}-\left( 1-\sqrt{5}\right) ^{n}}{%
2^{n}\sqrt{5}},\text{ }n\in 
\mathbb{N}%
\end{equation*}

\ 
\end{proof}

\ 

The other sequences obtained with initial conditions other than $1$ will
generate numbers which are not necessarily integers but ruled by the
sequential definition indicated at the beginning.\newline
Therefore we have sequences that are Fibonacci like but non integer valued.
It will be a challenge to find which initial conditions will provide a
sequence of integer values other than the Fibonacci sequence. It is a
problem that somebody can pursue.

\ 

\begin{proposition}
Let $\lambda \left( \neq 1\right) \in 
\mathbb{R}
$, the solution to the recursive equation: 
\begin{equation*}
\left\{ 
\begin{array}{c}
a_{n+1}=\lambda a_{n}+\beta \\ 
a\left( 1\right) =a_{1},\text{ }n\in 
\mathbb{N}%
\end{array}%
\right.
\end{equation*}

is given by 
\begin{equation*}
a_{n}=\left( a_{1}+\frac{\beta }{\lambda -1}\right) \lambda ^{n-1}+\frac{%
\beta }{1-\lambda },\text{ }n\in 
\mathbb{N}%
\end{equation*}
\end{proposition}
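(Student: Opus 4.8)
The plan is to follow the $\sum$-transform method used throughout the paper rather than a direct induction. First I would apply $\ell_d$ to both sides of the recursion $a_{n+1}=\lambda a_n+\beta$. The left-hand side is handled by the Corollary to the translation proposition, which gives $\ell_d\{a_{n+1}\}(s)=e^s\ell_d\{a_n\}(s)-a_1$. On the right-hand side, linearity of the transform together with the transform of the constant sequence — obtained by setting $a=1$ in the proposition $\ell_d\{a^{n-1}\}(s)=1/(e^s-a)$, so that $\ell_d\{1\}(s)=1/(e^s-1)$ — yields $\lambda\ell_d\{a_n\}(s)+\beta/(e^s-1)$. All of these series converge once $e^s$ exceeds $\max(1,|\lambda|)$, so the manipulations are justified by the Existence theorem.

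Equating the two sides and collecting the $\ell_d\{a_n\}(s)$ terms gives $(e^s-\lambda)\ell_d\{a_n\}(s)=a_1+\beta/(e^s-1)$, hence
\[
\ell_d\{a_n\}(s)=\frac{a_1}{e^s-\lambda}+\frac{\beta}{(e^s-1)(e^s-\lambda)}.
\]
The key algebraic step is the partial-fraction decomposition of the second term. Writing $1/((e^s-1)(e^s-\lambda))=A/(e^s-1)+B/(e^s-\lambda)$ and clearing denominators, the substitutions $e^s=1$ and $e^s=\lambda$ give $A=1/(1-\lambda)$ and $B=1/(\lambda-1)$. This is exactly where the hypothesis $\lambda\neq 1$ is indispensable: if $\lambda=1$ the two factors coincide, the simple-pole decomposition collapses, and one instead faces a double pole $1/(e^s-1)^2$ whose inverse transform is a genuinely different (affine-in-$n$) sequence. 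After substituting and grouping the two occurrences of $1/(e^s-\lambda)$, I obtain $\ell_d\{a_n\}(s)=\bigl(a_1+\beta/(\lambda-1)\bigr)/(e^s-\lambda)+\bigl(\beta/(1-\lambda)\bigr)/(e^s-1)$.

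Finally I would invert term by term using $\ell_d^{-1}\{1/(e^s-a)\}=a^{n-1}$ (the same proposition read backwards), taking $a=\lambda$ in the first term and $a=1$ in the second. This produces $a_n=\bigl(a_1+\beta/(\lambda-1)\bigr)\lambda^{n-1}+\beta/(1-\lambda)$, which is the asserted formula. The only genuinely delicate point beyond the bookkeeping is the partial-fraction/$\lambda\neq 1$ issue just noted; the inversion itself rests on the injectivity of $\ell_d$ on the relevant sequence class, assumed implicitly here as elsewhere in the paper. As a cheap independent check I would verify that the formula returns $a_1$ at $n=1$ and satisfies the recursion by a one-line substitution, making the argument self-contained.
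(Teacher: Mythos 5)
Your proposal is correct and follows essentially the same route as the paper's own proof: apply the $\sum$-transform to the recursion, solve for $\ell_d\{a_n\}(s)=\frac{a_1}{e^s-\lambda}+\frac{\beta}{(e^s-1)(e^s-\lambda)}$, split the second term by partial fractions (where $\lambda\neq 1$ enters), and invert via $\ell_d^{-1}\left\{\frac{1}{e^s-a}\right\}=a^{n-1}$. Your write-up is in fact slightly cleaner: the paper's intermediate partial-fraction line contains a typographical slip (both split terms carry the factor $e^s-1$, which would cancel), whereas your decomposition with $A=\frac{1}{1-\lambda}$, $B=\frac{1}{\lambda-1}$ is stated correctly.
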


\ \ 

\begin{proof}
Using the discrete replace transform of both sides of the equation in the
proposition, we have:

\begin{eqnarray*}
l_{d}\{a_{n}\} &=&\frac{a_{1}}{e^{s}-\lambda }+\frac{\beta }{\left(
e^{s}-1\right) \left( e^{s}-\lambda \right) } \\
&=&\frac{a_{1}}{e^{s}-\lambda }+\frac{\beta }{\left( \lambda -1\right)
\left( e^{s}-1\right) }+\frac{\beta }{\left( 1-\lambda \right) \left(
e^{s}-1\right) } \\
&=&\frac{\left( a_{1}+\frac{\beta }{\lambda -1}\right) }{e^{s}-\lambda }+%
\frac{\beta }{\left( 1-\lambda \right) \left( e^{s}-1\right) }
\end{eqnarray*}%
Then taking the inverse discrete Laplace transform, we have: 
\begin{equation*}
a_{n}=\left( a_{1}+\frac{\beta }{\lambda -1}\right) \lambda ^{n-1}+\frac{%
\beta }{1-\lambda },\text{ }n\in 
\mathbb{N}%
\end{equation*}

\ 

Note here why we restrict $\lambda \neq 1$.

\ 

The case for $\lambda =1$ is done in the following way: considering the
equation : 
\begin{equation*}
a_{n+1}=a_{n}+\beta ,\text{ \ }n=1,\text{ }2,\text{ }3,...
\end{equation*}

\ 

and taking the discrete Laplace transform of both sides, and solving for $%
l_{d}\{a_{n}\}$ we get

\begin{equation*}
l_{d}\{a_{n}\}=\frac{a_{1}}{e^{s}-1}+\frac{\beta }{\left( e^{s}-1\right) ^{2}%
}
\end{equation*}%
Applying the inverse discrete Laplace transform we have, $\ $%
\begin{eqnarray*}
a_{n} &=&l_{d}^{-1}\left\{ \frac{a_{1}}{e^{s}-1}\right\} +l_{d}^{-1}\left\{ 
\frac{\beta }{\left( e^{s}-1\right) ^{2}}\right\} \\
&=&a_{1}l_{d}^{-1}\left\{ \frac{1}{e^{s}-1}\right\} +\beta l_{d}^{-1}\left\{ 
\frac{1}{\left( e^{s}-1\right) ^{2}}\right\} \\
&=&a_{1}+\beta \left( 1\ast 1\right) \\
&=&a_{1}+\beta \left( n-1\right)
\end{eqnarray*}

\ \ \ \ \ \ \ \ \ \ \ \ 

where $1\ast 1$ is the convolution of the constant sequence $1$ by itself,
which is $n-1$. Therefore this case has a solution given by :

\ 
\begin{equation*}
a_{n}=\beta \left( n-1\right) +a_{1},\text{ \ }n\in 
\mathbb{N}%
\end{equation*}
\end{proof}

\section{$\protect\bigskip $}

\section{The $\infty -$order initial value problem (IVP$_{\infty }$)}

In this section we investigate the infinite order differential operator $%
\sum_{k=0}^{\infty }\frac{D^{k}}{k!}$ with special infinite number of
initial conditions, where $D=\frac{d}{dx}$. An infinite order initial value
problem for this differential operator can be stated as follow: 
\begin{equation*}
IVP_{\infty }:\left\{ 
\begin{array}{c}
\sum_{k=0}^{\infty }\frac{D^{k}}{k!}f(x)=g(x) \\ 
f^{\left( j\right) }\left( x_{0}\right) =y_{0,j},\text{ \ }j=0,\text{ }1,%
\text{ }2,...%
\end{array}%
\right.
\end{equation*}

\begin{proposition}
The infinite order IVP$_{\infty }$:%
\begin{equation*}
\left\{ 
\begin{array}{c}
\sum_{k=0}^{\infty }\frac{D^{k}}{k!}f(x)=\cos (x),\text{ \ for }x\in \lbrack
0,\infty ) \\ 
f^{\left( k\right) }\left( 0\right) =0,\text{ }\forall k\in 
\mathbb{N}
\cup \{0\}%
\end{array}%
\right.
\end{equation*}%
has a solution given by%
\begin{equation*}
f(x)=\cos \left( x-1\right) u(x-1)
\end{equation*}
\end{proposition}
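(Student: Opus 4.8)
The plan is to mirror the transform method used throughout the paper, but with the \emph{continuous} Laplace transform $\mathcal{L}\{f\}(s)=\int_0^\infty f(x)e^{-sx}\,dx=:F(s)$, since the operator $\sum_{k=0}^\infty D^k/k!$ is a power series in $D=d/dx$. The first move is to take the Laplace transform of both sides. Because every initial datum vanishes, $f^{(j)}(0)=0$ for all $j$, the usual derivative rule collapses to $\mathcal{L}\{f^{(k)}\}(s)=s^k F(s)$ with no boundary contributions. Applying this termwise and summing,
\[
\mathcal{L}\Big\{\sum_{k=0}^\infty \tfrac{D^k}{k!}f\Big\}(s)=\sum_{k=0}^\infty \frac{s^k}{k!}F(s)=e^{s}F(s).
\]
Since $\mathcal{L}\{\cos x\}(s)=s/(s^2+1)$, the equation becomes the purely algebraic relation $e^{s}F(s)=s/(s^2+1)$.

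Solving gives $F(s)=e^{-s}\,\dfrac{s}{s^2+1}$. The second step is to recognize the factor $e^{-s}$ as the Laplace-domain signature of a unit translation: by the second shifting (delay) theorem, if $G(s)=\mathcal{L}\{g\}(s)$ then $e^{-s}G(s)=\mathcal{L}\{g(x-1)u(x-1)\}(s)$, where $u$ is the Heaviside step. Taking $g(x)=\cos x$, so that $G(s)=s/(s^2+1)$, inversion yields $f(x)=\cos(x-1)\,u(x-1)$, the claimed solution. As a consistency check I would verify the initial conditions directly: since $u(x-1)=0$ on $[0,1)$, the function $f$ is identically zero near $x=0$, hence $f^{(k)}(0)=0$ for every $k$, as required.

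A useful sanity check, and an independent route, is the operator interpretation. By Taylor's theorem $\sum_{k=0}^\infty \frac{f^{(k)}(x)}{k!}=f(x+1)$, so $\sum_{k=0}^\infty \frac{D^k}{k!}$ is exactly the unit shift $f(x)\mapsto f(x+1)$. The equation then reads $f(x+1)=\cos x$ on $[0,\infty)$, i.e. $f(x)=\cos(x-1)$ for $x\ge 1$, while the vanishing Cauchy data force $f\equiv 0$ on $[0,1)$; together these reproduce $f(x)=\cos(x-1)u(x-1)$.

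The main obstacle will be rigor rather than computation, and it concentrates in two places. First, I must justify the term-by-term transform $\mathcal{L}\{\sum_{k}D^k f/k!\}=\sum_k s^k F/k!=e^sF$; this needs a convergence argument (dominated or uniform convergence of the partial sums against an integrable majorant on $[0,\infty)$), and here the vanishing initial data are essential, since otherwise the derivative rule would inject boundary polynomials in $s$ and the summed series would no longer reduce to $e^sF$. Second, the candidate $f(x)=\cos(x-1)u(x-1)$ jumps at $x=1$ (because $\cos 0=1$), so its classical derivatives fail to exist there and the equation must be read in the operational / shift (or weak) sense. I would therefore state explicitly that the solution is the causal function whose Laplace transform is $e^{-s}s/(s^2+1)$, and observe that the shift identity $f(x+1)=\cos x$ nonetheless holds for all $x\ge 0$, which is what legitimizes the stated answer.
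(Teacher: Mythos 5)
Your proposal follows essentially the same route as the paper's proof: apply the continuous Laplace transform, use the vanishing Cauchy data so the left side collapses to $\sum_{k}\frac{s^{k}}{k!}F(s)=e^{s}F(s)$, solve for $F(s)=e^{-s}\frac{s}{1+s^{2}}$, and invert via the shifting theorem to obtain $\cos(x-1)u(x-1)$. Your additions --- the identification of $\sum_{k=0}^{\infty}\frac{D^{k}}{k!}$ with the unit shift $f(x)\mapsto f(x+1)$, the explicit verification of the initial conditions, and the honest flagging of the jump discontinuity at $x=1$ (which forces a weak or operational reading of the equation) --- are refinements the paper omits, but they supplement rather than replace the paper's argument.
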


\begin{proof}
Using the Laplace transform :%
\begin{eqnarray*}
\int_{0}^{\infty }e^{-sx}\sum_{k=0}^{\infty }\frac{D^{k}}{k!}f(x)dx
&=&\int_{0}^{\infty }e^{-sx}\cos \left( x\right) dx \\
&=&\frac{s}{1+s^{2}}
\end{eqnarray*}%
But the left side of the equation becomes : 
\begin{eqnarray*}
\sum_{k=0}^{\infty }\frac{s^{k}}{k!}\int_{0}^{\infty }e^{-sx}f(x)dx
&=&F(s)\sum_{k=0}^{\infty }\frac{s^{k}}{k!} \\
&=&e^{s}F(s)
\end{eqnarray*}

Thus 
\begin{equation*}
e^{s}F(s)=\frac{s}{1+s^{2}}
\end{equation*}
\ which implies%
\begin{equation*}
F(s)=e^{-s}\frac{s}{1+s^{2}}
\end{equation*}

Taking the inverse Laplace transform and get 
\begin{equation*}
f\left( x\right) =\cos (x-1)u(x-1)
\end{equation*}

to be the required solution defined on the half line $[0,\infty )$.
\end{proof}

\ \ \ \ \ \ \ \ \ \ \ \ \ \ \ \ \ \ \ \ \ \ \ \

\end{document}